\documentclass{article}
\usepackage{amsmath}

\usepackage{amssymb}
\usepackage{amsthm}








\newtheorem{lemma}{Lemma}[section]
\newtheorem{theorem}{Theorem}[section]

\begin{document}

\title{Limit Weierstrass Points on the Kenyon-Smillie Family of Plane Quartics}


\author{R. F. Lax}

\maketitle




\begin{abstract}
Costantini and Kappes gave an algebraic equation of the universal family over the Kenyon-Smillie (2,3,4)-Teichm\"uller curve.  This equation gives rise to a family of projective plane quartic curves with three singular members.  These singular curves are: (1) an integral nodal quartic, (2) the union of a line and a nodal cubic, and (3) a quadruple line.  We determine the limit Weierstrass points on these singular curves.
\end{abstract}

\maketitle


A Teichm\"uller curve is a rare subvariety of the moduli space of curves that has several different characterizations (cf. \cite{Moller}), one of which is: an immersed curve in the moduli space $\mathcal{M}_g$ that is totally geodesic for the Teichm\"uller metric.  It is also the image in $\mathcal{M}_g$ of a closed GL$_2^+(\mathbb{R})$-orbit in the moduli space $\Omega\mathcal{M}_g$ parametrizing flat surfaces.  One of the few sporadic primitive Teichm\"uller curves is known as the Kenyon-Smillie (2,3,4)-curve, constructed by unfolding and shearing a Euclidean triangle with angles $2\pi/9,3\pi/9,\hbox{ and }4\pi/9$.
M. Costantini and A. Kappes \cite{coskap} gave an algebraic equation of the universal family over the Kenyon-Smillie (2,3,4)-Teichm\"uller curve.  Put

\begin{align}\label{family}
G(X,Y,Z)=&X^4 -3X^3Y +6X^3Z -3X^2Y^2 -6X^2YZ +6X^2Z^2 +\notag \\&4XY^3-6XY^2Z -6XYZ^2 +XZ^3 +3Y^4 +3Y^3Z\notag \\
F(X,Y,Z,t)=& X^4 +tG(X,Y,Z)
\end{align}

\begin{theorem}[\cite{coskap}]
The universal family over the complement of the orbifold point of the Kenyon-Smillie (2,3,4)-Teichm\"uller curve is given by the family of plane quartics satisfying the equation $F(X,Y,Z,t)=0$,
where $t$ varies in ${\mathbb{P}}^1\setminus \{0,1,\infty\}$.
\end{theorem}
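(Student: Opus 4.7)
The plan is to exploit the genus and the large automorphism group of the Kenyon--Smillie surface to pin down a normal form, and then to identify the parameter $t$ with a coordinate on the base of the Teichm\"uller curve. First I would recall that the Kenyon--Smillie $(2,3,4)$ unfolding produces translation surfaces of genus $3$, so the generic fiber is a non-hyperelliptic smooth curve and hence embeds canonically in $\mathbb{P}^2$ as a plane quartic. Thus the only real question is to exhibit a one-parameter family of quartic equations whose associated family of Jacobians realizes the Hodge-theoretic data of the Teichm\"uller curve.

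Next I would use symmetry to restrict the equation. The $(2,3,4)$-triangle has angles commensurable with $\pi/9$, and its unfolding carries a cyclic automorphism whose order can be read off from the triangle group. Every smooth fiber over the Teichm\"uller curve inherits this automorphism (since the $\mathrm{GL}_2^+(\mathbb{R})$-action commutes with affine symmetries of the flat surface), and the automorphism acts on $\mathbb{P}^2$ through its action on $H^0(X,\Omega^1)$. After diagonalizing, only the monomials in $X$, $Y$, $Z$ of the correct weight can appear in the defining quartic; imposing this and any further symmetries should cut the space of admissible quartics down to a pencil (plus a fixed leading $X^4$ term), which already has the shape $X^4 + t\,G(X,Y,Z)$ for some specific $G$.

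With the normal form in hand, the third step is to identify the parameter. I would compute the cusps and orbifold points of the Kenyon--Smillie curve (known from \cite{coskap} and the classification of primitive Teichm\"uller curves) and match them with the values of $t$ where the quartic $F(X,Y,Z,t)=0$ degenerates or acquires extra automorphisms. The three excluded values $t\in\{0,1,\infty\}$ must reproduce exactly the three special fibers (two cusps and the orbifold point), and a linear change of the coordinate $t$ fixes the remaining freedom. Finally one verifies that the period map from the explicit family agrees with the period map of the unfolded flat surface, e.g.\ by comparing one flat cohomology class with a residue computation on the quartic.

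The main obstacle is the period-matching in this last step: writing down an $\mathrm{SL}_2(\mathbb{R})$-equivariant identification between the algebraic family $\{F(\cdot,t)=0\}$ and the family of flat surfaces coming from the unfolding. The symmetry argument alone leaves genuine discrete and continuous ambiguities (a choice of coordinates on $\mathbb{P}^2$ and of M\"obius parameter on the base), and resolving them requires a careful period computation --- either using the eigenform decomposition of $H^1$ for the cyclic automorphism, or by analyzing the stable limits at the cusps and pinning down $G$ from the known boundary behavior of the Teichm\"uller curve.
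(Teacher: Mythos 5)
This statement is not proved in the paper at all: it is Costantini--Kappes's theorem, quoted from \cite{coskap} and used as the starting input for everything that follows. So there is no internal proof to compare your proposal against, and a reviewer of \emph{this} paper would simply expect the citation. Judged as a reconstruction of the proof in \cite{coskap}, your outline identifies the correct opening moves (the $(2,3,4)$ unfolding lies in genus $3$, in the odd --- hence non-hyperelliptic --- component of $\Omega\mathcal{M}_3(4)$, so the canonical model is a plane quartic), but it is a plan rather than a proof: you explicitly leave the decisive step, the period-matching that actually determines $G$ and the parameter $t$, as an unresolved ``main obstacle.'' That step is the entire content of the theorem, so as written the argument has a genuine gap.

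There is also a concrete error in your step 2. The cyclic affine symmetry of the unfolded triangle does \emph{not} descend to a holomorphic automorphism of every fiber over the Teichm\"uller curve: conjugating an affine automorphism with derivative $D$ by the $\mathrm{GL}_2^+(\mathbb{R})$-deformation $A$ yields derivative $ADA^{-1}$, which is no longer an isometry of the new flat structure unless $D$ is central. Only affine symmetries with derivative $\pm I$ persist along the orbit, and for this family the generic fiber has trivial automorphism group --- extra automorphisms occur precisely at the orbifold point, which the theorem excludes. Consistent with this, the quartic $X^4+tG$ is visibly not an eigen-pencil for any nontrivial linear $\mathbb{P}^2$-action for generic $t$. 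So the symmetry argument cannot cut the space of quartics down to a pencil, and the normal form $X^4+tG$ has to be extracted by other means (in \cite{coskap} it comes from a substantial explicit computation using the structure of the Teichm\"uller curve and its degenerate fibers, not from an automorphism of the smooth fibers). If you want to pursue your strategy, the boundary analysis you mention at the end --- pinning down $G$ from the stable fibers at the cusps, which this paper identifies as an integral $3$-nodal quartic at $t=1$ and a line plus nodal cubic at $t=\infty$ --- is the viable route, but it would need to be carried out, not just proposed.
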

  In this note, we determine the limit Weierstrass points for the singular curves in the family $F(X,Y,Z,t)=0, t\in {\mathbb{P}}^1$.  Since we are dealing with plane quartics, limits of Weierstrass points are the same as limits of flexes.

  The three singular fibers are quite different.  When $t=1$, we have an integral rational nodal plane quartic.  Such curves were studied by the author and C. Widland \cite{LaxWid} and the Weierstrass points may be defined intrinsically on such a curve using the (canonical) bundle of dualizing differentials.
  
   When $t=\infty$, we have a reducible, but reduced, curve with a rational component and a nodal cubic component.  The notion of limit Weierstrass points on reducible curves was studied by D. Eisenbud and J. Harris \cite{EHLimitsWpts} for curves of compact type and has been investigated by other authors for more complicated reducible curves.  However, the case in which the reducible curve has a rational component is a difficult one, with some results having been obtained by E. Esteves \cite{EstCD}, and by E. Esteves and N. Medeiros \cite{EstMed}.
   
   When $t=0$, we have the nonreduced curve $X^4=0$.  While some authors have investigated some curves with multiple components (for example in \cite{GeomSch} and \cite{EstMed}), we are not aware of any examples in the literature of limit Weierstrass points on a multiple line.   

\section{The fiber over $t=1$}

From \cite{coskap}, we have that $F(X,Y,Z,1)=0$ is an integral curve with three nodes.  Weierstrass points on integral Gorenstein curves were defined in \cite{WidLax} using the (canonical) line bundle of dualizing differentials.  The limit Weierstrass points on such a fiber in a family are these intrinsically defined Weierstrass points by \cite{LaxPAMS}.  Put
$$f_1(x,y) = F(X,Y,1,1).$$

Using Macaulay2 \cite{M2}, a Groebner basis, with respect to graded reverse lex, for the ideal $I=(f_1(x,y),\frac{\partial f_1}{\partial x}(x,y),\frac{\partial f_1}{\partial y}(x,y))$ is 
$$\{ y^2-x+2y-1, xy+x+y-1, x^2+x+y-1 \}.$$
From this we see that at a singular point of $f_1(x,y)=0$, we must have $x^3-3x+1=0$.  The roots of this equation are $\nu, \nu^2-2$, and $2-\nu-\nu^2$, where $\nu= 2\cos (2\pi/9)$.  Thus, the nodes of $f_1(x,y)=0$ are $$ (\nu,1-\nu-\nu^2), (\nu^2-2,\nu-1), \hbox{ and } (2-\nu-\nu^2,\nu^2-3).$$
We know from \cite{LaxWid} that each of these nodes is a Weierstrass point of weight 6, 7 (if a flecnode), or 8 (if a biflecnode).

\begin{lemma}
Let $C_t$ denote the projective plane curve $F(X,Y,Z,t)=0$.  If $t\ne 0,\infty$, then the point $(0:-1:1)$ is a hyperflex (Weierstrass point of weight 2) on $C_t$ and the point $(0:0:1)$ is a flex on $C_t$.
\end{lemma}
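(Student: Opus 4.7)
The plan is to verify each claim by a direct local computation at the two points: confirm each lies on $C_t$, determine the tangent line there, substitute that line into $F$, and read off the intersection multiplicity. For a smooth plane quartic point, tangent intersection multiplicity $3$ means a flex and $4$ means a hyperflex.

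For $P_0 = (0:0:1)$, dehomogenize at $Z=1$ and set $f(x,y)=F(x,y,1,t)$. Every monomial of $G$ is divisible by $X$ or $Y$, so $G(0,0,1)=0$ and hence $P_0\in C_t$. The unique degree-$1$ monomial of $G$ is $XZ^3$, so the linear part of $f$ at the origin is $tx$, and the tangent line to $C_t$ at $P_0$ is $X=0$. Substituting $X=0$ into $F$ gives $F(0,Y,Z,t)=tG(0,Y,Z)=3tY^3(Y+Z)$, which for $t\neq 0,\infty$ vanishes at $(0:0:1)$ to order exactly $3$ (and at $(0:-1:1)$ to order $1$). Hence $P_0$ is a flex but not a hyperflex.

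For $P_1=(0:-1:1)$, the identity $G(0,-1,1)=3(-1)^4+3(-1)^3(1)=0$ shows $P_1\in C_t$. Dehomogenizing at $Z=1$ and differentiating $G(x,y,1)$, one finds
\[
\tfrac{\partial G}{\partial x}(0,-1) = 4(-1)^3-6(-1)^2-6(-1)+1 = -3, \qquad \tfrac{\partial G}{\partial y}(0,-1) = 12(-1)^3+9(-1)^2 = -3,
\]
so the tangent to $C_t$ at $P_1$ is $X+Y+Z=0$. To compute the intersection multiplicity I would substitute $Z=-X-Y$ into $F(X,Y,Z,t)=X^4+tG(X,Y,Z)$. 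The key claim is that $G(X,Y,-X-Y)\equiv 0$; equivalently, $X+Y+Z$ divides $G$. Assuming this, one obtains $F(X,Y,-X-Y,t)=X^4$, which on the projective line $X+Y+Z=0$ vanishes only at $X=0$ (i.e., at $P_1$) and to order $4$. Thus the tangent meets $C_t$ at $P_1$ with multiplicity exactly $4$, so $P_1$ is a hyperflex, a Weierstrass point of weight $2$.

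The only real obstacle is the identity $G(X,Y,-X-Y)\equiv 0$. This is verified by expanding and collecting the coefficients of $X^4, X^3Y, X^2Y^2, XY^3, Y^4$ in $G(X,Y,-X-Y)$ and checking that each cancels to zero; the computation is elementary but tedious because nearly every monomial of $G$ contributes to each degree-$4$ coefficient. A slightly slicker alternative is to observe that $G$ restricted to the three points $(0,-1,1)$, $(1,0,-1)$, $(1,-1,0)$ on the line $X+Y+Z=0$ vanishes, and then apply the division with remainder for $G$ by the linear form $X+Y+Z$ (the remainder is a binary quartic in, say, $X,Y$ vanishing at three values of $X/Y$, which together with a fourth independent verification forces it to be zero). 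Either route establishes the divisibility and completes the proof.
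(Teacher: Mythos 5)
Your argument is correct and essentially the paper's: the paper's proof rests on the explicit factorization $G=(X+Y+Z)(X^3-4X^2Y+5X^2Z+XY^2-7XYZ+XZ^2+3Y^3)$, which is exactly your divisibility claim, and then notes that the tangent line $X+Y+Z=0$ meets $C_t$ only at $(0:-1:1)$, forcing intersection multiplicity $4$ there; your handling of $(0:0:1)$ by restricting $F$ to the tangent $X=0$ to get $3tY^3(Y+Z)$ is a clean, self-contained substitute for the paper's appeal to the Hessian. One small correction to your ``slicker alternative'': a nonzero binary quartic can have four distinct roots, so vanishing of the remainder at three or even four points of the line does not force it to vanish identically --- you would need five points, or simply the direct expansion (equivalently, multiplying out the factorization above), which does check out.
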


\begin{proof}
Note that
\begin{equation}\label{Gfactor}
G(X,Y,Z) = (X+Y+Z)(X^3-4X^2Y+5X^2Z +XY^2 -7XYZ +XZ^2 +3Y^3).
\end{equation} 
The point $(0:-1:1)$ is on $C_t$ and, assuming $t\ne 0,\infty$, has tangent line $X+Y+Z=0$. This tangent line meets $C_t$ only at this point, hence has intersection multiplicity four at this point.  Therefore, $(0:-1:1)$ is a hyperflex and a Weierstrass point of weight 2 on $C_t$ for $t\ne 0,\infty$.  It is straightforward to see that $(0:0:1)$ is a common point of $C_t$ and its Hessian.
\end{proof}

Since the curve $C_1$ is integral we can use its Hessian to find its flexes (with multiplicities).  We will see that all the flexes of $C_1$ occur in the affine $xy$-plane.

  Let $\Phi(X,Y,Z)$ denote a homogeneous polynomial of degree $d>1$ and let $\phi(x,y)= \Phi(x,y,1)$ be its dehomogenization. Then the dehomogenization of the Hessian of $\Phi$, which we will call the affine Hessian $AH(\phi)$,  can be written (cf. Example 11.4 of \cite{Gibson}) as
 
 \begin{equation*}
 AH(\phi) = {\begin{vmatrix}\frac{d}{d-1}\phi&\frac{\partial \phi}{\partial x}&\frac{\partial \phi}{\partial y}\\\frac{\partial \phi}{\partial x}&\frac{\partial^2 \phi}{\partial x^2}&\frac{\partial^2 \phi}{\partial x \partial y}\\\frac{\partial \phi}{\partial y}& \frac{\partial^2 \phi}{\partial x \partial y}&\frac{\partial^2 \phi}{\partial y^2}\end{vmatrix}}. 
 \end{equation*} 
 To find the flexes of $\phi(x,y)=0$ one finds the common zeros of $\phi$ and $AH(\phi)$.  Hence, in finding the flexes we can work ``modulo $\phi$'' and so we can replace $AH(\phi)$ by the modified affine Hessian, which we will denote $\overline{AH}(\phi)$, obtained by replacing the (1,1)-entry in the expression for $AH(\phi)$ by 0.  Thus the flexes of $C_1$ in the $xy$-plane are the common zeros of $f_1$ and $\overline{AH}(f_1)$.
 
 We used the NSolve command of Mathematica to approximate the common zeros of $f_1$ and $\overline{AH}(f_1)$.  The result is that each of the three nodes is a Weierstrass point of weight 6, $(0:-1:1)$ is a Weierstrass point of weight 2 (as noted above), $(0:0:1)$ is a Weierstrass point of weight 1, and approximate values, to three decimal places, for other Weierstrass points of weight 1 are: $(0.325:-1.140:1), (-0.177 - 0164i:0.032 - 0.125i:1),$ and $(-0.177 + 0.164i: 0.032+0.125i:1)$.  This accounts for all 24 Weierstrass points (with multiplicities) of $C_1$.  We note that $C_1$ is a curve of Case 2 from \cite{LaxWid}.  
 
\section{The fiber over $t=\infty$}

 By replacing $t$ by $-1/s$, the fiber over $t=\infty$ in our original family is the same as the fiber over $s=0$ of the family $\widetilde{F}(X,Y,Z,s)=G(X,Y,Z) - sX^4=0$. (We have introduced a $-1$ coefficient for $s$ to agree with the notation in \cite{EstCD}.) From (\ref{Gfactor}), we see that this fiber is a reducible curve with a rational component and a nodal cubic component with a node at $(1:1:1)$. Of course, the Hessian of this curve vanishes on the rational component, but one can still determine the limit Weierstrass points on this fiber.
 
  We wish to use Example 5.3 of \cite{EstCD}, but in order to do that we need to perform a change of variables.  Define
  $$\widehat{F}(X,Y,Z,s)=\widetilde{F}(X-Y-Z,Y,Z,s).$$
  Then
  \begin{equation}\label{newfam}
    \widehat{F}(X,Y,Z,s)=X\widehat{G}(X,Y,Z) -  s(X-Y-Z)^4,
 \end{equation}
     where

$$\begin{array}{rcl}
\widehat{G}(X,Y,Z)&=&X^3 -7X^2Y +2X^2Z+12XY^2 -3XYZ-\\
&&6XZ^2-3Y^3 + 9YZ^2 +3Z^3.\\
\end{array}$$
  The families $\widetilde{F}=0$ and $\widehat{F}=0$ are isomorphic and we have
\begin{equation}\label{varchange}  
\widetilde{F}(a,b,c,s)=0 \Leftrightarrow \widehat{F}(a+b+c,b,c,s)=0. 
\end{equation}
  The curve $\widehat{G}=0$ is an irreducible nodal cubic with a node at $(3:1:1)$.

In Example 5.3 of \cite{EstCD}, Esteves gives a formula involving a sum of three 0-cycles for the 0-cycle $R$ of the limit on the fiber over $s=0$ of flexes on nearby smooth fibers for a family like (\ref{newfam}).  One of these summands is $3[\widehat{G}\cdot X]$; i.e., three times the intersection cycle of $\widehat{G}=0$ and $X=0$ (assuming these intersection points are smooth points of $\widehat{G}$).  Here, we have 

$$[\widehat{G}\cdot X]= (0:\gamma_1:1) + (0:\gamma_2:1) + (0:\gamma_3:1),$$
where the $\gamma_j$ are the three roots of $Y^3-3Y-1=0$.  Notice that these roots are the negatives of the roots of $x^3-3x+1=0$ that appeared above. To three decimal places, approximate values for these roots are $-1.532, -0.347$, and $1.879$.  

A second summand, which, after Esteves, we denote $R_{\widehat{G}}$, is the 0-cycle of flexes of $\widehat{G}=0$.  Let
$$ \hat{f}=x^3 -7x^2y+2x^2 +12xy^2-3xy-6x-3y^3+9y+3$$
denote the dehomogenization of $\widehat{G}$.  Finding the common zeros of $\hat{f}$ and $\overline{AH}(\hat{f})$, we get that
$$R_{\widehat{G}} = 6(3:1:1) +  (1:0:1) + (\alpha:\beta:1) + (\bar{\alpha}:\bar{\beta}:1),$$
where, to three decimal places, $\alpha\approx 1.643 + .124i$ and $\beta\approx 0.429 + 0.083i$.

The third summand is a bit more complicated.  Put $\hat{g}(Y,Z)= \widehat{G}(0,Y,Z)$ and put $\bar{f}(Y,Z)=(-Y-Z)^4$ (this comes from setting $X=0$ in the function multiplying $s$ in the family $\widehat{F}=0$).  One now considers the following determinant, reminiscent of a Hessian:

 \begin{equation*}
 \hat{h}(Y,Z) = {\begin{vmatrix}\frac{\partial^2 \bar{f}}{\partial Z^2}&\frac{\partial^2 (Y\hat{g})}{\partial Z^2}&\frac{\partial^2 (Z\hat{g})}{\partial Z^2}\\ \\ \frac{\partial^2 \bar{f}}{\partial Y \partial Z}&\frac{\partial^2 (Y\hat{g})}{\partial Y \partial Z}&\frac{\partial^2 (Z\hat{g})}{\partial Y \partial Z}\\ \\ \frac{\partial^2 \bar{f}}{\partial Y^2}& \frac{\partial^2 (Y\hat{g})}{\partial Y^2}&\frac{\partial^2 (Z\hat{g})}{\partial Y^2}\end{vmatrix}}. 
 \end{equation*} 
 The third summand, which Esteves denotes $R_X$, in the expression for $R$ is then the 0-cycle of ${\mathbb{P}}^2$ cut out on $X=0$ by $\hat h(Y,Z)=0$.
 
 In our case, we find that
  \begin{align*}
 \hat{h}(Y,Z) &= 972(Y+Z)^2{\begin{vmatrix}1&2Y(Y+Z)&2Z(3Y+2Z)\\1&Z(4Y+Z)&-Y^2 + 3Z^2\\1&2(-2Y^2+Z^2)&-2YZ\end{vmatrix}}\\
 &= -1944(Y+Z)^2 (3Y^4 +11Y^3Z +12Y^2Z^2 +3YZ^3 +Z^4). 
 \end{align*} 
 We then have that 
 $$R_X = 2(0:-1:1)+ (0:\xi:1) + (0:\bar{\xi}:1) + (0:\mu:1) +(0:\bar{\mu}:1),$$
 where $\xi,\bar{\xi},\mu,\bar{\mu}$ are the roots of $3Y^4+11Y^3+12Y^2 +3Y +1=0$. To three decimal places, we have $\xi\approx -1.734 + 0.446i$ and $\mu\approx -0.100 + 0.307i$. 
 
Now, using (\ref{varchange}), we get the following result.
\begin{theorem}\label{Zcycle}
The 0-cycle of limit Weierstrass points on the fiber over $s=0$ of the family $G(X,Y,Z) - sX^4=0$ is

\begin{align*}
& 6(1:1:1) + 3\sum_{j=1}^3 (-\gamma_j-1:\gamma_j:1) + 2(0:-1:1)+\\
&(0:0:1) +(\alpha-\beta-1:\beta:1) + (\bar{\alpha}-\bar{\beta}-1:\bar{\beta}:1)+\\
&(-\xi -1:\xi:1) + (-\bar{\xi}-1:\bar{\xi}:1) + (-\mu -1:\mu:1) +(-\bar{\mu}-1:\bar{\mu}:1).
\end{align*}
This is also the 0-cycle of limit Weierstrass points  on the fiber over $t=\infty$ of the family $F=0$.
\end{theorem}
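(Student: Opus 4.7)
The plan is to invoke Esteves' Example 5.3 of \cite{EstCD} for the transformed family $\widehat{F}(X,Y,Z,s)=0$ of (\ref{newfam}), sum the three contributions already identified above, and then transport the resulting 0-cycle back to the $\widetilde{F}=0$ coordinates via the linear isomorphism (\ref{varchange}). Since the substitution $t=-1/s$ identifies the fiber at $t=\infty$ of $F=0$ with the fiber at $s=0$ of $\widetilde{F}=0$, the resulting description transfers at once to the original family, giving the last sentence of the theorem.

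First I would confirm that (\ref{newfam}) satisfies the hypotheses of Esteves' setup: the central fiber $\{X\widehat{G}=0\}$ is the union of the line $X=0$ and the irreducible nodal cubic $\widehat{G}=0$; the line meets the cubic transversely at the three smooth points $(0:\gamma_j:1)$; the node $(3:1:1)$ of $\widehat{G}$ is disjoint from the line; and the smoothing coefficient $(X-Y-Z)^4$ restricts nontrivially to the line as $\bar f(Y,Z)=(-Y-Z)^4$. Under these conditions Esteves expresses the limit flex 0-cycle on the central fiber as
\[
R \;=\; 3[\widehat{G}\cdot X] \,+\, R_{\widehat{G}} \,+\, R_X,
\]
and each summand has already been computed in the preceding paragraphs, with total degree $9+9+6=24$ as expected for a smooth plane quartic.

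Next I would transport $R$ to $\widetilde{F}=0$ by applying, point by point, the inverse of (\ref{varchange}), namely the projectivity $(A:B:C)\mapsto(A-B-C:B:C)$ of $\mathbb{P}^2$. The images $(3:1:1)\mapsto(1:1:1)$, $(0:\gamma_j:1)\mapsto(-\gamma_j-1:\gamma_j:1)$, $(0:-1:1)\mapsto(0:-1:1)$, $(1:0:1)\mapsto(0:0:1)$, $(\alpha:\beta:1)\mapsto(\alpha-\beta-1:\beta:1)$, $(0:\xi:1)\mapsto(-\xi-1:\xi:1)$, $(0:\mu:1)\mapsto(-\mu-1:\mu:1)$, together with their complex conjugates, reproduce exactly the 0-cycle written in the statement. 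Because (\ref{varchange}) is a linear automorphism of $\mathbb{P}^2$, multiplicities transfer without change, so the weight $6$ at $(3:1:1)$ becomes weight $6$ at $(1:1:1)$ and the weight $2$ at $(0:-1:1)$ is preserved.

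The main obstacle I anticipate is verifying that Esteves' Example 5.3 applies in exactly the form used above: that the three summands $3[\widehat{G}\cdot X]$, $R_{\widehat{G}}$, and $R_X$ account for all of the limit Weierstrass mass, with no extra correction terms arising from the node of $\widehat{G}$, from the three nodes of the total fiber at the transverse intersection points $(0:\gamma_j:1)$, or from any degeneracy of the auxiliary determinant $\hat h$ along $X=0$. Once that is in hand, the remainder of the argument is purely the bookkeeping of a projective linear change of variables and is unambiguous.
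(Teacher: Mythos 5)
Your proposal is correct and follows essentially the same route as the paper: apply Esteves' Example 5.3 to the transformed family $\widehat{F}=0$, sum the three 0-cycles $3[\widehat{G}\cdot X]$, $R_{\widehat{G}}$, and $R_X$ already computed, and transport the result back through the linear change of coordinates (\ref{varchange}) (and the substitution $t=-1/s$), with the point-by-point images and multiplicities exactly as you list them.
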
 
 
We can check this result by computing the Weierstrass points (flexes) on a fiber of $F=0$ over a large value of $t$. Put
$$f_m(x,y) = F(X,Y,1,10^6).$$
We computed estimates for the flexes on $f_m(x,y)=0$ by using the NSolve command in Mathematica to find approximate values for the common zeros of $f_m$ and $\overline{AH}(f_m)$.  As we already know, $(0,-1)$ is a hyperflex and $(0,0)$ is a flex. There are 21 other flexes for which we give approximate values to three decimal places and we give the coordinates in the $xy$-plane for the point in the 0-cycle in Theorem \ref{Zcycle} that these flexes are approaching:

\begin{align*}
&(1.025,1.020),(1.010,1.005)\approx (1,1)\\&(0.995 -0.008i,0.998 -0.004i),(0.995+0008i,0.998+0.004i)\approx (1,1)\\&(0.988-0.020i,0.990-0.016i),(0.988+0.020i,0.990+0.016)\approx (1,1)\\
&(0.529,-1.527),(0.533 +0.002i,-1.534-0.004i),\\
&(0.533-0.002i,-1.534+0.04i)\approx (-\gamma_1-1,\gamma_1)\\
&(-0.665,-0.357),(-0.647-0.012i,-0.342-0.008i),\\&(-0.647+0.010i,-0.342+.008i)\approx (-\gamma_2-1,\gamma_2)\\
&(-2.874,1.857),(-2.882+0.005i,1.891-0.019i),\\
&(-2.882-0.005i,1.891+0.019i)\approx (-\gamma_3-1,\gamma_3)\\
&(0.214 +0.041i,0.429+0.082i)\approx (\alpha-\beta-1,\beta)\\
&(0.214 -0.041i,0.429-0.082i)\approx (\bar{\alpha} -\bar{\beta} -1,\bar{\beta})\\
&(0.733 - 0.446i,-1.733 + 0.446i)\approx (-\xi -1,\xi)\\
&(0.733 + 0.446i, -1.733 -0.446i)\approx (-\bar{\xi}-1,\bar{\xi})\\
&(-0.900 -0.307i,-0.100 + 0.307i)\approx (-\mu -1,\mu)\\
&(-0.900 + 0.307i, -0.100 -0.307i)\approx (-\bar{\mu}-1,\bar{\mu}) 
\end{align*}

\section{The fiber over $t=0$}

To find the limit Weierstrass points on the fiber over $t=0$ (i.e., $X^4=0$) of our family (\ref{family}) we will use ideas from section IV.1.3 of \cite{GeomSch}.  Put $B= {\mathbb A}^1_{\mathbb{C}}= \hbox{ Spec }{\mathbb{C}}[t]$ and let ${\mathcal{C}}$ denote the curve over $B$ given by our family (\ref{family}).  Put $B^*=\hbox{ Spec }{\mathbb{C}}[t,t^{-1}]$, the punctured affine line.

Let $\mathcal{W}$ denote the scheme of flexes of $\mathcal{C}$. As in \cite{GeomSch}, let $\mathcal{W}^*$ be the inverse image in $\mathcal{W}$ of $B^*$.  The scheme $\mathcal{W}^*$ is finite and flat over $B^*$, with the fiber of $\mathcal{W}^*$ over a (nonzero) value of $t$ consisting of the 24 flexes (with multiplicity) on the fiber of $\mathcal{C}$ over that value of $t$.  (This is true even for $t=1$.)

Let $\mathcal{W}'$ denote the closure of $\mathcal{W}^*$ in ${\mathbb{P}}^2_B$, and let $W'_0$ denote the fiber of $\mathcal{W}'$ over $t=0$.  Since $B$ is nonsingular and one-dimensional, it follows from III.9.8 of \cite{Hart} that $\mathcal{W}'$ is flat over all of $B$.  Hence $W'_0$ has dimension 0 and degree 24 over $\mathbb{C}$.  Since we have shown that $(0:-1:1)$ is a hyperflex and $(0:0:1)$ is a flex on every fiber of $\mathcal{W}^*$ over $B^*$, we may infer that $(0:-1:1)$ is a point of multiplicity at least 2 in $W_0'$ and $(0:0:1)$ is a point in $W_0'$.

Let $I$ denote the ideal of $\mathcal{W}$ in the affine open subset Spec ${\mathbb{C}}[t] [x,y]\cong {\mathbb{A}}^2_B\subset {\mathbb{P}}^2_B$.  Then $$I = (f(x,y,t),\overline{AH}(f(x,y,t))),$$ where $f(x,y,t)=F(x,y,1,t)$ is the dehomogenization of $F$.
  Then, as in \cite{GeomSch}, we have that, in this affine open subset, 
  \begin{align*}
  &I^* = I \cdot \mathbb{C}[t,t^{-1}][x,y]\hbox{ is the ideal of }\mathcal{W}^*\\
  &I'= I^* \cap \mathbb{C}[t][x,y]\hbox{ is the ideal of }\mathcal{W}'\\
  &I'_0 = (I',t)\hbox{ is the ideal of }W'_0\hbox{ over }t=0.
  \end{align*}
  
  We will compute these ideals, over the rationals, using Macaulay2.  First, we construct the ideal $I$ with the following inputs: 
 \\
 \\
 {\tt 
 R=QQ[x,y,t]\\
 G=(x+y+1)*(x\verb|^|3-4*x\verb|^|2*y+5*x\verb|^|2+x*y\verb|^|2-7*x*y+x+3*y\verb|^|3)\\
 F=x\verb|^|4+t*G\\
 F1=diff(x,F)\\
 F2=diff(y,F)\\
 F12=diff(x,F2)\\
 F11=diff(x,F1)\\
 F22=diff(y,F2)\\
 M=matrix\verb|{|\verb|{|0,F1,F2\verb|}|,\verb|{|F1,F11,F12\verb|}|,\verb|{|F2,F12,F22\verb|}|\verb|}|\\
 AHbar=det M\\
 I=ideal(F,AHbar)\\
 }
 
Now we want to compute $I^*$, the extension of $I$ to the ring $\mathbb{Q}[t,t^{-1}][x,y]$.  We first create this extension ring as a quotient ring $\mathbb{Q}[x,y,t,s]/(st-1)$ so that $s$ plays the role of $t^{-1}$. Then we consider the embedding $\psi$ of our original ring into this new ring, then take the image of $I$ under this mapping. Next, we compute $I'$ by taking the preimage of $I^*$ under this embedding. The Macaulay2 inputs are:
\\
\\
{\tt
S=R[s]/(s*t-1)\\
psi=map(S,R,\verb|{|x,y,t\verb|}|)\\
Istar=psi I\\
Iprime=preimage\verb|_|psi Istar
}
\\

Then we compute the ideal $I_0'$:
\\
\\
{\tt use(R)\\
Izeroprime=Iprime+ideal(t)\\
gens gb Izeroprime
}
\\

A Groebner basis for $I_0'$ (with respect to graded reverse lex) is $\{t,x^4,g_1,g_2,g_3,g_4\}$ where

\begin{align*}
g_1 = &\> 8x^3y^3 + 3x^2y^4 - 8x^3y^2 -4x^3y\\
g_2 = &\> 3x^2y^5 +3x^2y^4 -12x^3y^2 -4x^3y\\
g_3 = &\>  48xy^7 +12y^8 -24xy^6 +12y^7 +45x^2y^4 -72xy^5 -448x^3y^2+\\
      &\> 144x^2y^3-12xy^4-196x^3y +24x^2y^2+12x^3-6x^2y\\
g_4 = &\> 24y^9 +60y^8 -216xy^6 +36y^7 +525x^2y^4 -240xy^5 -1056x^3y^2+\\
      &\> 480x^2y^3 -36xy^4 -524x^3y +60x^2y^2+36x^3 -18x^2y.            
\end{align*}   
 
 From this Groebner basis, we see that, in the affine open set in which we are working, the scheme $W_0'$ is supported only at the two points $(0,0)$ and $(0,-1)$.  By \cite{SS}, we can compute the multiplicity of $W_0'$ at $(0,0)$ by using the ``saturate'' command in Macaulay2 as follows:
 \\
 \\
 {\tt degree(Izeroprime : saturate(Izeroprime))}
 \\
 \\
 and we get 22 as the multiplicity at the origin.  Since we know that the multiplicity of $W_0'$ is at least 2 at $(0,-1)$ and the total degree of $W_0'$ is 24, we conclude that $W_0'$ is not supported at any points outside our affine subset and we have shown the following result.
 
 \begin{theorem}
 The 0-cycle of limit Weierstrass points on the fiber $X^4=0$ over $t=0$ of our family $F=0$ is $22(0:0:1) + 2(0:-1:1)$.
 \end{theorem}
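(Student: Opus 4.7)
The plan is to realise the limit 0-cycle as the fibre over $t=0$ of a flat family of $0$-dimensional subschemes of $\mathbb{P}^2$ obtained by closing up the scheme of flexes. Let $\mathcal{W}\subset\mathbb{P}^2_B$ be the scheme of flexes of $\mathcal{C}/B$. Over $B^{*}$ this is finite and flat of relative degree $24$, so the scheme-theoretic closure $\mathcal{W}'\subset\mathbb{P}^2_B$ of $\mathcal{W}^{*}=\mathcal{W}|_{B^{*}}$ is again finite, and by Hartshorne III.9.8 (flatness over a smooth one-dimensional base is equivalent to the absence of associated primes supported over a closed point), $\mathcal{W}'$ is flat over all of $B$. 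Consequently the fibre $W'_0$ is a $0$-cycle on $\mathbb{P}^2_\mathbb{C}$ of total degree $24$. From the Lemma, $(0:-1:1)$ and $(0:0:1)$ already lie in $W'_0$, with the former contributing multiplicity at least $2$.

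The next step is to translate this geometric setup into an explicit ideal computation. Working in the affine chart $Z=1$ with $f(x,y,t)=F(x,y,1,t)$, the ideal of $\mathcal{W}$ there is $I=(f,\overline{AH}(f))\subset\mathbb{Q}[t][x,y]$. To obtain the ideal $I'$ of $\mathcal{W}'$ I need the $t$-saturation $I^{*}\cap\mathbb{Q}[t][x,y]$, where $I^{*}$ extends $I$ to $\mathbb{Q}[t,t^{-1}][x,y]$; na\"ively setting $t=0$ in $I$ returns the spurious locus $X=0$, i.e.\ the entire quadruple line, rather than the correct limit. In Macaulay2 this saturation is carried out by introducing $s=t^{-1}$ subject to $st=1$, extending $I$ along the natural embedding $\psi$, and then taking the preimage of the extended ideal. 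Adjoining $t$ then gives $I'_0$, the ideal of $W'_0$ in the affine chart.

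A Gr\"obner basis computation with respect to graded reverse lex produces an explicit generating set of the form $\{t,x^4,g_1,g_2,g_3,g_4\}$ for $I'_0$; inspection shows every generator vanishes only at $(x,y)=(0,0)$ or $(x,y)=(0,-1)$, so the support of $W'_0$ in this chart consists of exactly these two points. To pin down the multiplicity at the origin, I invoke the observation of \cite{SS} that the length of the primary component at $(0,0)$ is the degree of the colon ideal $I'_0:\mathrm{sat}(I'_0)$; Macaulay2 returns $22$. Combined with the flatness-based total degree of $24$ and the lower bound $\geq 2$ at $(0:-1:1)$, this simultaneously rules out any additional support on the line $Z=0$ at infinity and forces the multiplicity at $(0:-1:1)$ to be exactly $2$, yielding the stated cycle.

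The main obstacle is conceptual rather than computational: one must recognise that the na\"ive special fibre of $\mathcal{W}$ is not the right object, and that passing to the flat closure $\mathcal{W}'$ before specialising to $t=0$ is precisely what the $t$-saturation accomplishes. Once this reduction is in place, the remaining steps are finite symbolic computations (Gr\"obner basis and a colon-ideal degree), and the matching of the three numerical constraints $24=22+2$, $\mathrm{mult}_{(0:-1:1)}\geq 2$, and $\mathrm{mult}_{(0:0:1)}=22$ closes the argument.
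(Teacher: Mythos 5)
Your proposal is correct and follows essentially the same route as the paper: form the flat closure $\mathcal{W}'$ of the flex scheme over the punctured base (justified by Hartshorne III.9.8), compute its fiber ideal $I'_0$ at $t=0$ via the extension--contraction (i.e.\ $t$-saturation) in Macaulay2, read off the support $\{(0,0),(0,-1)\}$ from the Gr\"obner basis, obtain multiplicity $22$ at the origin from the colon with the saturation, and close the count $24=22+2$ using the hyperflex lower bound at $(0:-1:1)$. No substantive differences from the paper's argument.
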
  
 
 We note that when we tried to check this result using NSolve to estimate the values of the flexes on the fiber over $t=10^{-7}$ by finding common roots of $f_\epsilon(x,y)=F(x,y,1,10^{-7})$ and $\overline{AH}(f_\epsilon)$, there were errors (for example, NSolve found more than 24 approximate roots, some of them very large).  However, when we used NSolve to estimate flexes on this same curve by estimating common zeros of $F_\epsilon(X,Y,Z)=F(X,Y,Z,10^{-7})$ and the (projective) Hessian of $F_\epsilon$ we did get appropriate answers: $(0:-1:1)$ a hyperflex, $(0:0:1)$ a flex, and 21 other flexes in the affine $xy$-plane whose $x$-coordinates were $0.00$ to two decimal places and whose $y$-coordinates had modulus less than $0.3$ and usually less than $0.1$. The convergence to the 0-cycle in the theorem appears to be ``slow.''  
   
\bibliographystyle{amsplain}

\bibliography{mybibliography}

\providecommand{\bysame}{\leavevmode\hbox to3em{\hrulefill}\thinspace}
\providecommand{\MR}{\relax\ifhmode\unskip\space\fi MR }
\providecommand{\MRhref}[2]{%
  \href{http://www.ams.org/mathscinet-getitem?mr=#1}{#2}
}
\providecommand{\href}[2]{#2}
\begin{thebibliography}{10}

\bibitem{coskap}
Matteo Costantini and Andr\'{e} Kappes, \emph{The equation of the
  {K}enyon-{S}millie {$(2,3,4)$}-{T}eichm\"{u}ller curve}, J. Mod. Dyn.
  \textbf{11} (2017), 17--41. \MR{3588522}

\bibitem{EHLimitsWpts}
David Eisenbud and Joe Harris, \emph{Existence, decomposition, and limits of
  certain {W}eierstrass points}, Invent. Math. \textbf{87} (1987), no.~3,
  495--515. \MR{874034}

\bibitem{GeomSch}
\bysame, \emph{The geometry of schemes}, Graduate Texts in Mathematics, vol.
  197, Springer-Verlag, New York, 2000. \MR{1730819}

\bibitem{EstCD}
Eduardo Esteves, \emph{Limits of {C}artier divisors}, J. Pure Appl. Algebra
  \textbf{214} (2010), no.~10, 1718--1728. \MR{2608101}

\bibitem{EstMed}
Eduardo Esteves, Nivaldo Medeiros, and Wallace Sousa, \emph{Limits of dual
  curves via foliations}, arXiv:1909.08647, 2019.

\bibitem{Gibson}
C.~G. Gibson, \emph{Elementary geometry of algebraic curves: an undergraduate
  introduction}, Cambridge University Press, Cambridge, 1998. \MR{1663524}

\bibitem{M2}
Daniel~R. Grayson and Michael~E. Stillman, \emph{Macaulay2, a software system
  for research in algebraic geometry}, Available at {\tt
  http://www.math.uiuc.edu/Macaulay2/}.

\bibitem{Hart}
Robin Hartshorne, \emph{Algebraic geometry}, Springer-Verlag, New
  York-Heidelberg, 1977, Graduate Texts in Mathematics, No. 52. \MR{0463157}

\bibitem{LaxPAMS}
R.~F. Lax, \emph{Weierstrass weight and degenerations}, Proc. Amer. Math. Soc.
  \textbf{101} (1987), no.~1, 8--10. \MR{897062}

\bibitem{LaxWid}
R.~F. Lax and Carl Widland, \emph{Weierstrass points on rational nodal curves
  of genus {$3$}}, Canad. Math. Bull. \textbf{30} (1987), no.~3, 286--294.
  \MR{906350}

\bibitem{Moller}
Martin M\"{o}ller, \emph{Geometry of {T}eichm\"{u}ller curves}, Proceedings of
  the {I}nternational {C}ongress of {M}athematicians---{R}io de {J}aneiro 2018.
  {V}ol. {III}. {I}nvited lectures, World Sci. Publ., Hackensack, NJ, 2018,
  pp.~2017--2034. \MR{3966840}

\bibitem{SS}
Gregory~G. Smith and Bernd Sturmfels, \emph{Teaching the geometry of schemes},
  Computations in algebraic geometry with {M}acaulay 2, Algorithms Comput.
  Math., vol.~8, Springer, Berlin, 2002, pp.~55--70. \MR{1949548}

\bibitem{WidLax}
Carl Widland and Robert Lax, \emph{Weierstrass points on {G}orenstein curves},
  Pacific J. Math. \textbf{142} (1990), no.~1, 197--208. \MR{1038736}

\end{thebibliography}

\end{document}